\documentclass[11pt,a4paper]{article}
\usepackage{epsf,algorithm,epsfig,amsfonts,amsgen,amsmath,amstext,amsbsy,amsopn,amsthm,lineno}
\usepackage{color}
\usepackage{tikz}
\usepackage{cite}
\usepackage{enumerate}
\usepackage{float}
\usepackage{epstopdf}
\usepackage{subcaption}
\usepackage{multicol}
\usepackage{amssymb}
\usetikzlibrary{decorations.pathreplacing,calligraphy}
\usetikzlibrary{calc}

\newtheorem{theorem}{Theorem}

\newtheorem{lemma}{Lemma}

\newtheorem{problem}{Problem}

\newtheorem{claim}{Claim}

\newcounter{mathitem}

\newcommand{\chiS}{\chi_S}

\usetikzlibrary{decorations.markings}
\tikzstyle{vertex}=[circle, draw, inner sep=0pt, minimum size=5pt]

\begin{document}

\title{\bf\Large On the maximal anti-Ramsey problem of Burr, Erd\H{o}s, Graham, and S\'{o}s for $P_4$\thanks{This research was supported by National Key Research and Development Program of China (No.~2023YFA1010203), and National Natural Science Foundation of China (Nos. ~12401464 and 12471334).}
}
\date{}
\author{Zixuan Yang$^{a}$\thanks{Corresponding author.}
~\\[2mm]
\small $^{a}$School of Mathematics,  Northwest University, Xi'an, Shaanxi, P.R. China\\}
\maketitle

\begin{abstract}
Given a graph $L$, the maximal anti-Ramsey function $\chiS(n,e,L)$ denotes the minimum integer $\chiS$ for which there exists an $n$-vertex graph $G$ with at least $e$ edges admitting an edge-coloring with $\chiS$ colors in which each copy of $L$ in $G$ is rainbow.
In 1989, Burr, Erd\H{o}s, Graham, and S\'{o}s  posed the following problem:
Is it true that for all $\epsilon>0$, there exists $c(\epsilon)>0$ such that for all sufficiently large $n$,
$
\chiS\left(n,\binom{n}{2}-\lfloor n^{2-\epsilon}\rfloor,P_4\right)>c(\epsilon)n^2.
$
Very recently, Li, Ning, and Xie  gave a negative answer to the  problem for all $0< \epsilon< 1/2$.
In this note, we establish that a quadratic lower bound holds in the complementary regime $ \epsilon\geq 1/2$.
More specifically, we prove that for all $\epsilon\ge 1/2$ and sufficiently large $n$, there is an absolute constant $c>0$ such that
$
\chiS\left(n,\binom{n}{2}-\lfloor n^{2-\epsilon}\rfloor,P_4\right)>c n^2.
$

\medskip
\noindent {\bf Keywords:} maximal anti-Ramsey function; rainbow path; induced matching
\medskip

\smallskip
\end{abstract}


\setcounter{footnote}{0}
\renewcommand{\thefootnote}{}
\footnotetext{E-mail addresses:  {\tt  yangzixuan@nwpu.edu.cn}}

\section{Introduction}

Throughout this paper all graphs are finite, undirected, and simple.   
Let $G$ be a graph with vertex set $V(G)$ and edge set $E(G)$.  
For a vertex $u$ of $G$, the \emph{degree} of $u$ in $G$ is denoted by $d_G(u)$.  
For $S\subseteq V(G)$, the subgraph of $G$ induced by $S$ is denoted by $G[S]$.
 Given  positive integer $n$,  we use  $K_n$, $C_n$, and $P_n$ to denote the complete graph, the cycle, and the path of $n$ vertices, respectively. 
The \textit{complement graph} (or simply \textit{complement}) of $G$, denoted $\overline{G}$, is the unique simple graph $\overline{G} = (V(G), \overline{E(G)})$ on the identical vertex set $V(G)$, whose edge set $\overline{E(G)}$ satisfies
$
\overline{E} = \bigl\{ \{u,v\} \subseteq V(G) \,\big|\, u \neq v,\ \{u,v\} \notin E(G) \bigr\}.
$

A \emph{matching} of a graph $G$ is a  subset of $E(G)$ such that no two edges share a vertex in common.
An edge set $M\subseteq E(G)$ is an \emph{induced matching} if the subgraph of
$G$ induced by the endpoints of the edges in $M$ has edge set exactly $M$.  In
particular, no two edges of $M$ are adjacent, and there is no edge of $G$ joining
endpoints of two different edges of $M$.

Ramsey theory asserts, in many different forms, that sufficiently large structures contain highly organized substructures. 
In the graph setting, the ambient structure is usually an edge-colored graph, and the organized subgraph is monochromatic. Anti-Ramsey theory, initiated by Erd\H{o}s, Simonovits, and S\'{o}s \cite{Erd3}, asks for the opposite extreme: a subgraph is called \textit{rainbow} if all its edges have distinct colors, and one seeks conditions forcing such rainbow subgraphs.

Burr, Erd\H{o}s, Graham, and S\'{o}s \cite{Bur2} introduced a dual maximal anti-Ramsey parameter, denoted $\chi_S(G,L)$.  
For any two graphs $G$ and $L$, let $\chi_S(G,L)$ be the minimum number of colors in an edge-coloring of $G$ in which every copy of $L$ is rainbow.  
For positive integers $n,e$, define
\[
\chiS(n,e,L)=\min\{\chiS(G,L): |V(G)|=n,\ |E(G)|\ge e\}.
\]
The function $\chi_S(n,e,L)$, referred to as the \emph{maximal anti-Ramsey function},  has an immediate connection to extremal graph theory. Here, one of the central objects of study is the Turán number of a graph. The $n$-vertex \textit{Turán number} of $L$, denoted $\mathrm{ex}(n,L)$, is defined as the maximum number of edges in an $n$-vertex graph that contains no copy of $H$. Assuming $L$ has at least two edges, it is immediate that $\chiS(n,e,L) = 1$ if and only if $e \le \mathrm{ex}(n,L)$. This implies that the asymptotic behavior of $\chi_S(n,e,L)$ is only non-trivial when $e > \mathrm{ex}(n,L)$. A natural foundational stability question addresses the threshold case $e = \mathrm{ex}(n,L)+1$. This regime was first investigated  by  Burr, Erd\H{o}s, Graham, and S\'{o}s \cite{Bur2}, who focused on $\chi_S\left(n, \lfloor n^2/4 \rfloor + 1, C_{2k+1}\right)$.
They also conjectured that, for all integers $k \ge 3$,
$
\chi_S\left(n, \lfloor n^2/4 \rfloor + 1, C_{2k+1}\right) = n^2/8 + o(n^2).
$
Recently, Buci\'{c}, Chen, and Ma \cite{Buc} proved that for an integer $k \ge 4$ and $n^2/4 + 1 \le e \le \binom{n}{2}$,
\[
\chi_S\left(n,e,C_{2k+1}\right) = \frac{e}{2} + \frac{n}{2}\sqrt{e - \frac{n^2}{4}} + o(n^2),
\]
confirming the conjecture for all $k \ge 4$ in a stronger form. Thus, the case $k = 3$ remains the only open case.

In addition to odd cycles, several results have been obtained when $L$ is a bipartite graph. Burr, Erd\H{o}s, Graham, and S\'{o}s \cite{Bur1} have investigated both cases regarding whether $L$ contains two strongly independent edges. Burr, Erdős, Graham, and Sós \cite{Bur2} determined the values of $\chi_S(n, \lfloor un \rfloor, P_k)$ for $k=3$, and for the range $u \ge k \ge 5$. Later, Sárközy and Selkow \cite{Sar} obtained a linear lower bound for connected bipartite graphs other than complete bipartite graphs.

In this paper, we focus on the case $L = P_4$, which is one of the central small bipartite cases in the work of Burr, Erd\H{o}s, Graham, and S\'{o}s \cite{Bur2}. It is closely related to Ruzsa–Szemerédi graphs, since an edge-coloring whose color classes are induced matchings automatically guarantees that every $P_4$ is rainbow. Fox, Huang, and Sudakov \cite{Fox} studied Ruzsa-Szemerédi graphs whose induced matchings have linear size.
Burr, Erd\H{o}s, Graham, and S\'{o}s \cite{Bur2} posed the following  open problem for $P_4$.

\begin{problem}[Burr, Erd\H{o}s, Graham, and S\'{o}s, \cite{Bur2}]
\label{prob:1.3}
Is it true that for all $\epsilon > 0$, there exists $c(\epsilon) > 0$ such that for all sufficiently large $n$, $\chi_S\left(n, \binom{n}{2} - \lfloor n^{2-\epsilon} \rfloor, P_4\right) > c(\epsilon)n^2$?
\end{problem}

Very recently, Li, Ning, and Xie \cite{Li} proved that the quadratic lower bound in Problem \ref{prob:1.3} fails for all  \(0<\epsilon <1/2\). Their proof relies on a construction originally due to Alon, Moitra, and Sudakov \cite{Alo}, which they adapt in \cite{Li} to demonstrate that the required quadratic bound cannot hold for any fixed \(\epsilon\) in the interval \((0,1/2)\).

In this note, we complement this problem by showing the quadratic lower bound is valid for all \(\epsilon \ge 1/2\).

\begin{theorem}\label{thm:main}
For every fixed $\epsilon\ge 1/2$, there exists $n_0=n_0(\epsilon)$ such that,
for every $n\ge n_0$,
\[
\chiS\left(n,\binom{n}{2}-\lfloor n^{2-\epsilon}\rfloor,P_4\right)>
\frac{n^2}{72}.
\]
\end{theorem}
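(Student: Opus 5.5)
The plan is to turn the $P_4$-rainbow condition into a structural restriction on colour classes, and then show by a double count that colour classes have average size $O(1)$ when weighted quadratically.

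\textbf{Step 1 (colour classes are induced matchings).} Fix $G$ on $n$ vertices with $|E(G)|\ge\binom n2-m$, where $m=\lfloor n^{2-\epsilon}\rfloor\le n^{3/2}$ since $\epsilon\ge 1/2$. Fix also a colouring in which every $P_4$ is rainbow.

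If two adjacent edges $ab,bc$ had the same colour, we extend this $P_3$ to a $P_4$. Any neighbour of $a$ or of $c$ outside $\{a,b,c\}$ works. Such a neighbour exists because the minimum degree is large after the cleaning of Step 2; alternatively, we only need this inside the dense part.

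If two disjoint edges $ab,cd$ had the same colour and some edge, say $bc$, joined them, then $abcd$ would be a non-rainbow $P_4$.

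Hence every colour class, restricted to the part we work in, is an induced matching of $G$.

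\textbf{Step 2 (cleaning).} Let $\bar d(v)$ denote the degree of $v$ in $\overline G$, so $\sum_v\bar d(v)\le 2n^{3/2}$. For a constant $K$ to be chosen later, let
\[
S=\{v:\bar d(v)\le K\sqrt n\},
\]
so that $|V\setminus S|\le 2n/K$. Then $G[S]$ has at least $\binom{|S|}{2}-n^{3/2}\ge (1-O(1/K))\,n^2/2$ edges, and every vertex of $S$ has degree at least $|S|-1-K\sqrt n$ in $G[S]$. This makes the $P_3$-extension in Step 1 valid inside $G[S]$.

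For each colour $c$, let $k_c$ be the number of edges of colour $c$ in $G[S]$. By Step 1, each such class is an induced matching in $G[S]$.

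\textbf{Step 3 (the key double count).} Fix $a\in S$ and a colour $c$ that appears on an edge $ab$ of $G[S]$. Every other edge of colour $c$ in $G[S]$ has both endpoints nonadjacent to $a$. Therefore these $k_c-1$ edges all lie inside $\overline N(a)\cap S$, a set of size at most $K\sqrt n$.

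Edges of distinct colours are distinct, so summing over all colours $c$ present at $a$ gives
\[
\sum_{c\text{ at }a}(k_c-1)\le\binom{K\sqrt n}{2}\le \frac{K^2n}{2}.
\]
Summing over $a\in S$, each colour $c$ is counted from $2k_c$ endpoints. This yields
\[
\sum_c 2k_c(k_c-1)\le \frac{K^2n^2}{2}, \qquad\text{hence}\qquad \sum_c k_c^2\le \frac{K^2n^2}{4}+E,
\]
where $E=\sum_c k_c\approx (1-O(1/K))\,n^2/2$.

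\textbf{Step 4 (Cauchy--Schwarz).} By Cauchy--Schwarz, the number of colours satisfies
\[
t\ge \frac{E^2}{\sum_c k_c^2}\ge \frac{E^2}{K^2n^2/4+E}.
\]
With $E\ge(1-4/K)\,n^2/2$, this is at least a constant times $n^2$. Optimising $K$ (something like $K\approx 6$–$8$) and letting $n$ be large should give the stated bound $n^2/72$ with room to spare.

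\textbf{Main obstacle.} I expect the delicate point to be Step 3. The naive bound on a single class size (about $\sqrt n$) only gives $n^{3/2}$ colours. The gain comes from charging all colours at a vertex $a$ simultaneously to the $O(n)$ pairs inside its small non-neighbourhood. This requires the cleaning of Step 2, since high-complement-degree vertices would otherwise make $\sum_a\binom{\bar d(a)}2$ too large.

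A secondary check is that the monochromatic-$P_3$ exclusion really holds inside $G[S]$, so that each vertex sees each colour at most once. This follows because all degrees in $G[S]$ are close to $|S|$.
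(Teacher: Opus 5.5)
\textbf{Gap: the argument gives a quadratic bound, but not the stated constant $1/72$.} Your strategy is the paper's strategy. You discard vertices of complement degree above $K\sqrt n$, show that colour classes inside the remaining set $S$ are induced matchings, charge the other edges of each colour seen at $a$ to pairs of non-neighbours of $a$, and finish with Cauchy--Schwarz. This does prove a quadratic lower bound. Your claim in Step 4 that it gives $n^2/72$ ``with room to spare'' is false.

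The loss is in Step 3. There you bound every vertex's contribution by the uniform maximum $\binom{K\sqrt n}{2}$ and multiply by $|S|\le n$. This gives $\sum_c 2k_c(k_c-1)\le K^2n^2/2$, which is quadratic in $K$. Since $E<n^2/2$, the Step 4 bound $E^2/(K^2n^2/4+E)$ is less than $n^2/K^2$, so you would need $K<\sqrt{72}$. For such $K$ you can only guarantee $E\ge(1-2/K)^2n^2/2-o(n^2)$. Writing $E=\alpha n^2/2$ with $\alpha=(1-2/K)^2$, your chain yields $t\ge\alpha^2n^2/(K^2+2\alpha)$. This is maximised near $K=6$, at about $n^2/187$; for $K=8$ it is about $n^2/206$.

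\textbf{Repair (what the paper does).} Keep the actual non-degree in Step 3. Let $\bar d_S(a)$ be the number of non-neighbours of $a$ in $S$. Then
$\sum_{c\text{ at }a}(k_c-1)\le\binom{\bar d_S(a)}{2}\le\frac{K\sqrt n}{2}\,\bar d_S(a)$,
and you sum using $\sum_{a\in S}\bar d_S(a)\le 2|E(\overline G)|\le 2n^{3/2}$. This gives $\sum_c 2k_c(k_c-1)\le Kn^2$, which is linear rather than quadratic in $K$.

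With $K=8$ you get $\sum_c k_c(k_c-1)\le 4n^2$, hence $\sum_c k_c^2<4n^2+n^2/2$. Also $|S|>3n/4$ gives $E\ge\binom{|S|}{2}-n^{3/2}>n^2/4$ for large $n$. Cauchy--Schwarz then yields $t>(n^2/4)^2/(9n^2/2)=n^2/72$.

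Your closing remark already names $\sum_a\binom{\bar d(a)}{2}$ as the quantity to control. Step 3 just has to bound it through $\sum_a\bar d(a)$ rather than through $|S|\cdot\max_a\bar d(a)$.
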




\section{Proof of Theorem \ref{thm:main}}

Before proving the Theorem \ref{thm:main}, we first show the following lemma.

\begin{lemma}\label{lem:dense-core}
Let $G$ be an $n$-vertex graph such that its complement $H=\overline G$ satisfies $|E(H)|\le n^{3/2}$.  
Let $B=\{v\in V(G): d_H(v)>8\sqrt n\}$, $U=V(G)\setminus B$, and  $F=G[U].$
For all sufficiently large $n$, the following statements hold:
\begin{itemize}
\item [\rm{(i)}]  if the edges of $G$ are colored such that every copy of $P_4$ in $G$ is rainbow, then every color class restricted to $E(F)$ is an induced matching in $F$;
\item [\rm{(ii)}]  if $E(F)$ can be partitioned into $q$ induced matchings,  then $q> n^2/72$.
\end{itemize}
\end{lemma}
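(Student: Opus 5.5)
For (i) and (ii) I will first record the basic size facts. Since $\sum_v d_H(v)=2|E(H)|\le 2n^{3/2}$, we get $|B|\cdot 8\sqrt n<2n^{3/2}$, so $|B|<n/4$ and $|U|>3n/4$. Every $u\in U$ has $d_H(u)\le 8\sqrt n$, so $d_F(u)\ge |U|-1-8\sqrt n$. Hence $F$ is nearly complete on $|U|>3n/4$ vertices.

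For (i), fix a colour class $M$ and restrict it to $E(F)$. I must show two things: no two edges of $M\cap E(F)$ share a vertex, and no edge of $F$ joins endpoints of two distinct edges of $M\cap E(F)$.

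For the first, suppose $xy,xz\in M\cap E(F)$ share $x$. The set $N_G(y)\cap N_G(z)\cap U$ minus $\{x,y,z\}$ has size at least $|U|-16\sqrt n-3>0$, so it contains some $w$. Then $w\,y\,x\,z$ is a $P_4$ in $G$ whose two edges $yx,xz$ have the same colour, a contradiction. Alternatively I can take $w$ a $G$-neighbour of $z$ outside $\{x,y\}$, giving the path $y\,x\,z\,w$.

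For the second, suppose $ab,cd\in M\cap E(F)$ are disjoint and $bc\in E(F)$. Then $a\,b\,c\,d$ is a $P_4$ in $G$ containing $ab$ and $cd$ with equal colours, a contradiction. So $M\cap E(F)$ is an induced matching in $F$. This part is routine.

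For (ii), the key point is that an induced matching in $F$ must be small, because $F$ has small complement degrees. Let $M=\{a_1b_1,\dots,a_tb_t\}$ be an induced matching in $F$. For $i\ne j$, none of the four pairs $a_ia_j$, $a_ib_j$, $b_ia_j$, $b_ib_j$ is an edge of $F$. Since both vertices lie in $U$, all four pairs are edges of $H$. So $a_1$ has $H$-degree at least $2(t-1)$, while $d_H(a_1)\le 8\sqrt n$. This gives $t\le 4\sqrt n+1$.

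Then $q\ge |E(F)|/(4\sqrt n+1)$. Since $|E(F)|\ge \binom{|U|}{2}-n^{3/2}\ge \frac{9n^2}{32}-O(n^{3/2})$, this yields only $q=\Omega(n^{3/2})$, not quadratic. So the degree bound alone is insufficient, and this is the main obstacle.

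I would instead exploit that $H$ has few edges in total. For each induced matching $M_k$ of size $t_k$, the $H$-edges among its endpoints number at least $4\binom{t_k}{2}$. Two distinct matchings $M_k$ and $M_l$ may share such a non-edge, so I cannot simply sum this quantity; summing naively would give $\sum_k 2t_k^2\lesssim n^{3/2}$. Even granting that, Cauchy--Schwarz gives $q\ge (\sum t_k)^2/\sum t_k^2\gtrsim n^4/n^{3/2}$, which is far more than quadratic and cannot be right. So the disjointness must fail, and the real argument has to charge each $H$-edge to a pair of $F$-edges.

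The charging I would try is on non-edges. An $H$-edge $uv$ with $u,v\in U$ certifies at most the matching pairs $(e,f)$ with $u\in e$, $v\in f$. For fixed $e\ni u$ there are about $d_F(v)\le n$ choices of $f\ni v$, and each $f$ lies in exactly one matching. I then count triples (matching, $e$, $f$) with $e,f$ in the same matching: this count is at least $\sum_k t_k(t_k-1)$ and at most $\sum_{uv\in E(H[U])}(\text{number of pairs } (e,f) \text{ through } u,v \text{ in a common class})$. The bound per $H$-edge is the delicate step. Failing a clean charging, I would fall back on the standard fact that a partition into induced matchings gives $q\ge |E(F)|/\max_k t_k$ combined with a sharper structural bound on $t_k$. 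Choosing the constant to land exactly at $n^2/72$, which suggests $t_k\le 4$ together with $|E(F)|>n^2/18$, is the part I am least sure of.
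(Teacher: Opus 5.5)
Your part (i) is correct and is the paper's argument: a third edge at $y$ for two adjacent same-coloured edges, and the path $abcd$ for an edge joining two same-coloured disjoint edges. Part (ii), however, is not proved. The proposal ends with an unfinished charging scheme and a fallback that cannot work.

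\textbf{Why the fallback fails.} The bound $q\ge |E(F)|/\max_k t_k$ is capped at order $n^{3/2}$, because induced matchings in $F$ can genuinely have size of order $\sqrt n$. Suppose $H$ contains a cocktail-party graph, i.e.\ $K_{2t}$ minus a perfect matching $\{a_ib_i\}$, with $t=\lfloor\sqrt n\rfloor$. This uses only about $2n$ edges, and its vertices have $H$-degree $2t-2\le 8\sqrt n$, so they lie in $U$. Then $\{a_ib_i\}$ is an induced matching of size $t$ in $F$, and a partition of $E(F)$ may contain it as a class. So no bound like $t_k\le 4$ holds. The constant $1/72$ does not come from a bound on $\max_k t_k$. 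It equals $(1/4)^2/(9/2)$: Cauchy--Schwarz, $q\ge m^2/\sum_k t_k^2$, combined with $m=|E(F)|>n^2/4$ and a \emph{second-moment} bound $\sum_k t_k^2<\tfrac92 n^2$.

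\textbf{The missing idea.} You need to show $\sum_k t_k(t_k-1)\le 4n^2$. Your per-$H$-edge charging fails only because you bounded the choices by $d_F(v)\le n$. In fact, once $e=uu'$ is chosen, its class is fixed, and $f$ is the unique edge of that class at $v$. Moreover, the induced condition forces $u'\in N_H(v)$. Hence each ordered $H[U]$-edge $(u,v)$ certifies at most $d_H(v)\le 8\sqrt n$ ordered same-class pairs $(e,f)$. Each such pair is certified by exactly four ordered $H$-edges, which gives $4\sum_k t_k(t_k-1)\le 2|E(H)|\cdot 8\sqrt n\le 16n^2$.

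The paper does the equivalent count vertex by vertex. Consider the triples $(v,i,e)$ with $v$ covered by $M_i$ and $e\in M_i$ not incident to $v$; there are $\sum_i 2t_i(t_i-1)$ of them. For fixed $v$, both endpoints of $e$ lie in $N_{\overline F}(v)$, and that pair determines $e$ and hence $i$. So $v$ contributes at most $\binom{d_{\overline F}(v)}{2}\le 4\sqrt n\, d_{\overline F}(v)$ triples, and the total is at most $8n^2$. It follows that $\sum_i t_i^2<\tfrac{n^2}{2}+4n^2$, and therefore $q> n^2/72$.
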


\begin{proof}

We first consider the size of $E(F)$.

\begin{claim}\label{c4}
$|E(F)|>\frac{n^2}{4}$.
\end{claim}

\begin{proof}[Proof of Claim \ref{c4}]
Since $|E(H)|\le n^{3/2}$, the degree sum in $H$ satisfies
\[
\sum_{v\in V(G)}d_H(v)=2|E(H)|\le 2n^{3/2}.
\]
Then by the definition of $B$, we have
\[
8\sqrt n\,|B|<\sum_{v\in B}d_H(v)\le \sum_{v\in V(G)}d_H(v)\le 2n^{3/2},
\]
and therefore $|B|<n/4$, which implies that $|U|>3n/4$. 

Notice that the complement of $F$  is $H[U]$.  So we have
\begin{align}\label{eq1}
|E(F)|=\binom{|U|}{2}-|E(H[U])|
       \ge \binom{|U|}{2}-|E(H)|.
\end{align}
Substitute the inequalities $|U|>3n/4$ and $|E(H)|\le n^{3/2}$ into inequality (\ref{eq1}) to obtain the lower bound
\[
|E(F)|>\binom{3n/4}{2}-n^{3/2}
       =\frac{9n^2}{32}-\frac{3n}{8}-n^{3/2}.
\]
Since $n^2/32-n^{3/2}-3n/8\ge 0$ for  sufficiently large $n$,
we get that 
\[
|E(F)|> \frac{8n^2}{32}=\frac{n^2}{4}.
\]
Thus, the claim holds.
\end{proof}

We now prove the statement (i) by two claims. 

\begin{claim}\label{c1}
Any two adjacent edges in $F$ cannot have the same color. 
\end{claim}

\begin{proof}[Proof of Claim \ref{c1}]
By contradiction. 
Suppose that there exist two edges $xy,xz\in E(F)$ such that the colors of $xy$ and $xz$ are the same.  
Since $y\in U$, we have
$d_H(y)\le 8\sqrt n$, and consequently
\[
d_G(y)=n-1-d_H(y)\ge n-1-8\sqrt n>2,
\]
for large enough $n$. 
Thus $y$ has a neighbor $w$ in $G$ with $w\notin\{x,z\}$.  
One can see that the three edges $wy,yx,xz$ form a copy of $P_4$ in $G$, where the four vertices $w,y,x,z$ are distinct.  
Therefore, we obtain a path $P_4$ which  is not rainbow since $yx$ and $xz$ have the same color, a contradiction.  
\end{proof}

\begin{claim}\label{c2}
For any two disjoint edges  $e_1, e_2$ in $F$ with the same color, there exist no edges of $G$ connecting an endpoint of $e_1$ to an endpoint of $e_2$. 
\end{claim}

\begin{proof}[Proof of Claim \ref{c2}]

By contradiction. Suppose that there exist two disjoint edges $ab,cd\in E(F)$  with the same color such that  
 an edge of $G$ joins one endpoint of $ab$ to one endpoint of $cd$.
Then after relabeling the endpoints, without loss of generality, we may assume that $bc\in E(G)$.  
This implies that three edges $ab,bc,cd$  form a $P_4=abcd$ in $G$, and one can see that this copy of $P_4$ has two  edges with the same colors (namely $ab$ and $cd$), which  contradicts the fact that every copy of $P_4$ in $G$ is rainbow.  Therefore, the claim holds.
\end{proof}

By Claim \ref{c1},  each color class on $F$ is a matching, and then by Claim \ref{c2}, each color class
restricted to $F$ is  an induced matching in $F$. Therefore, the statement (i) holds.

In the following, we prove the statement (ii).
Let $M_1,M_2,\ldots,M_q$ be the nonempty induced matchings in $F$.
Write $ m=\sum_{i=1}^q |M_i|$. Then $m=|E(F)|$.
Let 
\[
\mathcal T=\{(v,i,e): v\in U,\ e\in M_i,\ v\text{ is covered by }M_i,
\text{ and }v\text{ is not an endpoint of }e\}.
\] 

We next count  the size of $\mathcal T$ by using double-counting.
For any fixed $i\in [q]$, since $M_i$ is a matching with $|M_i|$ edges, it covers exactly $2|M_i|$
vertices.  For each covered vertex $v$, there is exactly one edge of $M_i$
incident with $v$, and hence there are $|M_i|-1$ choices for an edge
$e\in M_i$ not incident with $v$.  Therefore
\begin{equation}\label{eq:T-lower}
|\mathcal T|=\sum_{i=1}^q 2|M_i|(|M_i|-1).
\end{equation}

\begin{claim}\label{c3}
For any fixed  vertex $v\in U$,  the number of triples in
$\mathcal T$ with first coordinate $v$ is at most $\binom{d_{\overline {F}}(v)}{2}$.  
\end{claim}

\begin{proof}[Proof of Claim \ref{c3}]
Let $(v,i,e)\in\mathcal T$, and write $e=xy$.  
Since $v$ is covered by $M_i$, there is a unique edge $f\in M_i$ incident with $v$, where $1\le i\le q$.  
Note that the edge $e$ is not incident with $v$. 
So $e$ and $f$ are two distinct edges of the induced matching $M_i$, where $1\le i\le q$.  
One can see that both $x$ and $y$ belong to $N_{\overline F}(v)$.
Indeed, otherwise, if $v$ were adjacent in $F$ to $x$ or to $y$, then there exists a edge of $F$  joining an
endpoint of $f$ to an endpoint of $e$, which contradicts the fact that $M_i$ is induced.

We now show that the map $(v,i,e)\mapsto \{x,y\}$ is injective.  
Indeed, the edge $e$ belongs to exactly one member of the partition, so the pair $\{x,y\}$ determines both $e$ and its index $i$.  
Hence the number of triples with first coordinate $v$ is at most the number of two-element subsets of
$N_{\overline F}(v)$, namely $\binom{d_{\overline {F}}(v)}{2}$. This completes the proof of Claim \ref{c3}.  
\end{proof}

Summing over $v\in U$ and by Claim \ref{c3} and \eqref{eq:T-lower}, we obtain that 
\begin{equation}\label{eq:key-count}
\sum_{i=1}^q 2|M_i|(|M_i|-1)=|\mathcal T|
\le \sum_{v\in U}\binom{d_{\overline {F}}(v)}{2}.
\end{equation}

Now we establish an upper bound of $\sum_{v\in U}\binom{d_{\overline {F}}(v)}{2}$.
By the definition of $U$, we have $d_H(v)\le 8\sqrt n$ for any $v\in U$.
 Since $\overline {F}=H[U]$, we have 
 \begin{align}\label{eq0}
 d_{\overline {F}}(v)=d_{H[U]}(v)\le d_H(v)\le 8\sqrt n.
 \end{align}
and consequently,
\begin{align}\label{eq00}
\sum_{v\in U}d_{\overline {F}}(v)=2|E(H[U])|
\le 2|E(H)|\le 2n^{3/2}.
\end{align}
By (\ref{eq0}) and (\ref{eq00}), we have
\begin{align}\label{eq5}
\sum_{v\in U}\binom{d_{\overline {F}}(v)}{2}&=\sum_{v\in U}\frac{d_{\overline {F}}(v)(d_{\overline {F}}(v)-1)}{2}\notag\\
&\le \sum_{v\in U} \frac{d_{\overline {F}}(v)\cdot 8\sqrt n}{2}\notag\\
&= 4\sqrt n\sum_{v\in U}d_{\overline {F}}(v)\notag\\
&\le 4\sqrt n\cdot 2n^{3/2}=8n^2.
\end{align}

Combining \eqref{eq:key-count} and (\ref{eq5}), we have
\begin{equation}\label{eq:ki-ki-1}
\sum_{i=1}^q (|M_i|^2-|M_i|)=\sum_{i=1}^q |M_i|(|M_i|-1)\le 4n^2.
\end{equation}
Note that 
\[
\sum_{i=1}^q |M_i|=m=|E(F)|\le \binom n2<n^2/2.
\]
So by  \eqref{eq:ki-ki-1}, we get
\begin{equation}\label{eq:square-sum}
\sum_{i=1}^q |M_i|^2=m+\sum_{i=1}^q  |M_i|( |M_i|-1)<\frac{n^2}{2}+4n^2=\frac{9n^2}{2}.
\end{equation}

Notice that
\[
m^2=\left(\sum_{i=1}^q |M_i|\right)^2\le q\sum_{i=1}^q |M_i|^2,
\]
by the Cauchy-Schwarz inequality, and $m=|E(F)|> n^2/4$ by Claim \ref{c4}. Thus, by \eqref{eq:square-sum}, we have
\[
q\ge \frac{m^2}{\sum_{i=1}^q |M_i|^2}
>\frac{(n^2/4)^2}{9n^2/2}
=\frac{n^2}{72}.
\]
This completes the proof of Lemma \ref{lem:dense-core}.
\end{proof}


\begin{proof}[Proof of Theorem~\ref{thm:main}]

Fix an arbitrary constant $\epsilon \ge 1/2$. Let $G$ be any $n$-vertex graph with
\[
|E(G)| \ge \binom{n}{2} - \left\lfloor n^{2-\epsilon} \right\rfloor.
\]
Then the complement graph $\overline{G}$ contains at most $\left\lfloor n^{2-\epsilon} \right\rfloor$ edges. Since $\epsilon \ge 1/2$, we have $2-\epsilon \le 3/2$, which implies that
\[
|E(\overline{G})| \le n^{2-\epsilon} \le n^{3/2}.
\]

Now we consider any edge-coloring of $G$ such that every copy of $P_4$ in $G$ is rainbow. We apply Lemma~\ref{lem:dense-core} to $G$, which yields a vertex subset $U \subseteq V(G)$ and the induced subgraph $F = G[U]$. By Lemma~\ref{lem:dense-core}  (i), when restricted to $E(F)$, the nonempty color classes of the coloring form a partition of $E(F)$ into induced matchings of $F$. Furthermore, by Lemma~\ref{lem:dense-core}  (ii), the number of such induced matchings (i.e., the number of colors appearing on $E(F)$) is strictly greater than $n^2/72$ for all sufficiently large $n$.

Since the original edge-coloring of $G$ uses at least as many colors as those appearing on $E(F)$, every rainbow $P_4$-coloring of $G$ requires more than $n^2/72$ colors.
As the above bound holds for every $n$-vertex graph $G$ with at least $\binom{n}{2} - \left\lfloor n^{2-\epsilon} \right\rfloor$ edges, so taking the minimum over all such graphs yields
\[
\chiS\left(n, \binom{n}{2} - \left\lfloor n^{2-\epsilon} \right\rfloor, P_4\right) > \frac{n^2}{72}
\]
for all sufficiently large $n$, which completes the proof of Theorem \ref{thm:main}.
\end{proof}





\begin{thebibliography}{30}


\bibitem{Alo} N. Alon, A. Moitra, and B. Sudakov, Nearly complete graphs decomposable into large induced matchings and their applications, \emph{J. Eur. Math. Soc.}, \textbf{15} (2013), 1075--1096.

\bibitem{Buc} M. Buci\'{c}, K. Chen, and J. Ma, On a maximal anti-Ramsey conjecture of Burr, Erd\H{o}s, Graham, and S\'{o}s, arXiv:2603.18952, 2026.


\bibitem{Bur1} S. A. Burr, P. Erd\H{o}s, P. Frankl, R. L. Graham, and V. T. S\'{o}s, Further results on maximal anti-ramsey graphs, in \emph{Graph Theory, Combinatorics, and Applications, Vol. I}, Y. Alavi, A. Schwenk (Eds.), John Wiley and Sons, New York, 1988, 193--206.

\bibitem{Bur2} S. A. Burr, P. Erd\H{o}s, R. L. Graham, and V. T. S\'{o}s, Maximal antiramsey graphs and the strong chromatic number, \emph{J. Graph Theory}, \textbf{13} (1989), no. 3, 263--282.





\bibitem{Erd3} P. Erd\H{o}s, M. Simonovits, and V. T. S\'{o}s, Anti-Ramsey theorems, in \emph{Infinite and finite sets (Colloq., Keszthely, 1973); dedicated to P. Erdős on his 60th birthday}, Vol. II, Colloq. Math. Soc. J\'{a}nos Bolyai, Vol. 10, North Holland, Amsterdam, 633--643.





\bibitem{Fox} J. Fox, H. Huang, and B. Sudakov, On graphs decomposable into induced matchings of linear sizes, \emph{Bull. Lond. Math. Soc.}, \textbf{49} (2017), no. 1, 45--57.



\bibitem{Li} M. Li, B. Ning, and T. Xie, Two problems of Burr, Erd\H{o}s, Graham, and S\'{o}s on maximal anti-Ramsey functions for $P_4$, arXiv:2606.30505v1, 2026.


\bibitem{Sar} G. N. Sárközy and S. M. Selkow, On an anti-Ramsey problem of Burr, Erd\H{o}s, Graham, and T. S\'{o}s, \emph{J. Graph Theory}, \textbf{52} (2006), no. 2, 147--156.



\end{thebibliography}
\end{document}